\newtheorem{theorem}{Theorem}[section]
\newtheorem{proposition}[theorem]{Proposition}
\theoremstyle{definition}
\newtheorem{definition}[theorem]{Definition}
\newtheorem{example}[theorem]{Example}
\numberwithin{equation}{section}
\renewcommand{\d}{\partial}
\newcommand{\e}{\epsilon}
\begin{document}

\title{Opetopes and chain complexes}

\author{Richard Steiner}

\address{School of Mathematics and Statistics\\University of Glasgow\\
University Gardens\\Glasgow\\Scotland G12 8QW}

\email{Richard.Steiner@glasgow.ac.uk}

\keywords{opetope; augmented directed complex}

\subjclass[2010]{18D05}

\begin{abstract}
We give a simple algebraic description of opetopes in terms of chain complexes, and we show how this description is related to combinatorial descriptions in terms of treelike structures. More generally, we show that the chain complexes associated to higher categories generate graphlike structures. The algebraic description gives a relationship between the opetopic approach and other approaches to higher category theory. It also gives an easy way to calculate the sources and targets of opetopes. 
\end{abstract}

\maketitle

\section{Introduction} \label{S1}

There are many different approaches to the theory of higher categories; see \cite{B3},~10 for an overview. Some of these approaches have been put into a common framework based on chain complexes, for example the simplicial, cubical and finite disc approaches (see \cite{B4}, \cite{B5}, \cite{B6}). In this paper we will show how the opetopic approach fits into this framework.

Opetopes were originally defined by Baez and Dolan~\cite{B1}, and there is a slightly different definition due to Leinster \cite{B3},~7. In~\cite{B2} Kock, Joyal, Batanin and Mascari give a simple reformulation of Leinster's definition in terms of trees, and we will essentially use their form of the definition. The definition in terms of trees is easy to visualise, but the algebraic description given here makes some calculations easier. In particular it provides an easy way to compute source and taget opetopes.

The algebraic and combinatorial descriptions of opetopes are given in Sections \ref{S2} and~\ref{S3}. Section~\ref{S4} contains a general result on the chain complexes related to higher categories (Theorem~\ref{T4.1}), from which one sees that they generate families of graphlike structures. In Section~\ref{S5} we apply this result to the chain complexes related to opetopes. We show that they generate families of treelike structures, and are in fact equivalent to these families. In this way, we prove that the algebraic and combinatorial descriptions of opetopes are equivalent. Finally, in Section~\ref{S6}, we discuss subcomplexes of the chain complexes associated to opetopes, and we apply this discussion to the computation of sources and targets.

\section{An algebraic description of opetopes} \label{S2}

In this section we give a description of opetopes in terms of chain complexes with additional structure. The chain complexes concerned are free augmented directed complexes as defined in~\cite{B4}, and we first recall some of the definitions from that paper.

\begin{definition} \label{D2.1}
A \emph{free augmented directed complex} is an augmented chain complex of free abelian groups, concentrated in nonnegative dimensions, together with a distinguished basis for each chain group.
\end{definition}

Given a chain~$c$ in a free augmented directed complex, we will write
$$\d c=\d^+ c-\d^- c$$
such that $\d^+ c$ and $\d^- c$ are sums of basis elements without common terms; thus $\d^+ c$ and $\d^- c$ are the `positive and negative parts' of the boundary of~$c$.

The free augmented directed complexes of greatest general interest are those which are unital and loop-free in the following senses.

\begin{definition} \label{D2.2}
A free augmented directed complex is \emph{unital} if
$$\e(\d^-)^q a=\e(\d^+)^q a=1$$
for each basis element~$a$, where $q$~is the dimension of~$a$. 
\end{definition}

\begin{definition} \label{D2.3}
A free augmented directed complex is \emph{loop-free} if its basis elements can be partially ordered such that, for any basis element~$a$ and for any integer $r>0$, the basis elements which are terms in $(\d^-)^r a$ precede the basis elements which are terms in $(\d^+)^r a$.
\end{definition}

The complexes which represent opetopes will be loop-free and unital. They will also be \emph{atomic}; that is to say, they will be generated by single basis elements in the following sense.

\begin{definition} \label{D2.4}
A free augmented directed complex is \emph{atomic} of dimension~$n$ if it has no basis elements of dimension greater than~$n$, if it has exactly one basis element of dimension~$n$, and if every basis element of dimension less than~$n$ is a term in $\d^- a$ or $\d^+ a$ for some higher-dimensional basis element~$a$.
\end{definition}

In the opetopic approach to higher category theory, operations have single outputs and any number of inputs; in particular there are nullary operations with zero inputs. Correspondingly, for a positive-dimensional basis element~$a$, the chain $\d^+ a$ should be a single basis element, while the chain $\d^- a$ can be more or less arbitrary. But we do not want to have $\d^- a=0$, because this would be incompatible with unitality, so we have a  problem with nullary operations. To cope with this, we introduce a distinguished class of \emph{thin basis elements} of positive dimensions, which are in some sense to be regarded as negligible; nullary operations will then be represented by basis elements~$a$ such that $\d^- a$~is a thin basis element. For a positive-dimensional basis element~$a$ we then require $\d^+ a$ to be a non-thin basis element, because operations are supposed to have genuine single outputs. For a thin basis element~$a$ we will also require $\d^- a$ to be a non-thin basis element; this is because $\d a$ should be negligible, so that $\d^- a$ should be the same kind of chain as $\d^+ a$. In order to avoid redundancy, we will also require every thin basis element to be of the form $\d^- a$ for some other basis element~$a$; that is to say, we will have the minimum number of thin basis elements required for the nullary operations. It is convenient to make the definition in two stages, as follows.

\begin{definition} \label{D2.5}
An \emph{opetopic directed complex} is an atomic loop-free unital free augmented directed complex, together with a distinguished class of positive-dimensional basis elements called \emph{thin basis elements}, subject to the following conditions: 

\textup{(1)} If $a$~is a positive-dimensional basis element then $\d^+ a$ is a non-thin basis element.

\textup{(2)} If $a$~is a thin basis element then $\d^- a$ is a non-thin basis element.
\end{definition}

\begin{definition} \label{D2.6}
An opetopic directed complex is \emph{reduced} if every thin basis element is of the form $\d^- a$ for some other basis element~$a$.
\end{definition}

The main result of this paper is now as follows.

\begin{theorem} \label{T2.7}
Opetopes are equivalent to reduced opetopic directed complexes.
\end{theorem}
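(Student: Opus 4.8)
The statement is an equivalence between two things: "opetopes" (defined combinatorially via treelike structures, to be recalled in Section 3) and "reduced opetopic directed complexes" (the algebraic gadget just defined). So this is really a *definitional bridge* theorem, and the proof can't be written until the combinatorial side is on the table. Let me think about what the strategy must be.

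Opetopes in the Kock–Joyal–Batanin–Mascari style: an opetope is defined by iterated "zoom complexes" or trees of trees. Dimension 0 is a point. Dimension 1 is an arrow (a tree with one edge, or one node?). Actually in the KJBM picture, an $(n+1)$-opetope is a tree whose nodes are labeled by $n$-opetopes, with matching conditions on the input/output faces. More precisely, an opetope is given inductively: an $n$-dimensional opetope has a "target" which is an $(n-1)$-opetope and a "source" which is a pasting diagram (a tree) of $(n-1)$-opetopes whose boundary matches.

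On the algebraic side: an atomic loop-free unital free augmented directed complex of dimension $n$ has a unique top basis element $g$. Its $\partial^+ g$ is a single non-thin basis element (the target), and $\partial^- g$ is a sum of basis elements (the source). The subcomplexes generated by these lower basis elements should themselves be opetopic directed complexes of lower dimension (this is presumably developed in Section 6 on subcomplexes). So the whole structure is again inductive: dimension, top element, target sub-opetope, source which is a "pasting" of sub-opetopes.

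**The plan.** I would prove the theorem by exhibiting mutually inverse constructions and an induction on dimension. First direction: given a reduced opetopic directed complex $K$ of dimension $n$ with top basis element $g$, I would show that the subcomplex on the basis elements below dimension $n$ that are "$\partial^+$-reachable" is atomic of dimension $n-1$ and inherits reducedness — giving the target $(n-1)$-opetope — and that the basis elements appearing in $\partial^- g$, together with the subcomplexes they generate, assemble into a tree: the tree structure comes from how the sources and targets of these $(n-1)$-dimensional basis elements overlap, exactly one element being "output" and the rest being "inputs", with the loop-free order guaranteeing the tree is well-founded (no cycles). Thin basis elements are what let nullary nodes (leaves with no further decomposition, the "units") appear, and the reducedness condition says we have exactly the thin elements forced by such nullary nodes — this matches the KJBM bookkeeping of the "unit tree" / degenerate cells. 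Second direction: given an opetope presented as a tree of lower opetopes, build the chain complex by taking the free abelian group on the set of all faces (cells of all dimensions occurring in the iterated tree), with $\partial^-$ recording the source pasting-tree of each face and $\partial^+$ its target, adjoining one thin basis element for each nullary node; verify loop-freeness from the planar/tree order, unitality from the fact that each iterated source and target ultimately reduces to a point, atomicity by construction, and conditions (1),(2) of Definition 2.5 from the single-output property. Then check the two constructions are inverse, and — crucially — that they are functorial/natural for whatever notion of map of opetopes Section 3 uses, so that "equivalent" really means an equivalence of the relevant categories (or at least a bijection of isomorphism classes, if that is all that is claimed).

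**The main obstacle.** The hard part is the combinatorial-to-algebraic translation of the tree structure, specifically showing that the boundary data $(\partial^-, \partial^+)$ of the lower basis elements *encodes exactly* a well-formed opetopic tree and nothing more — i.e. that the loop-free partial order on basis elements is equivalent to the ancestor order in the tree of trees, and that the matching conditions "source of a node = corresponding target lower down" are precisely captured by the chain-complex identity $\partial\partial = 0$ together with atomicity. In effect one must prove that $\partial\partial=0$ in these small complexes is the same statement as the geometric compatibility of faces in a pasting diagram; this is where the real content lies, and it is the analogue of the corresponding verification for simplices and cubes in \cite{B4}, \cite{B5}, \cite{B6}. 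I expect this to be carried out by the general Theorem~\ref{T4.1}, which (by its statement in the introduction) says the chain complexes attached to higher categories generate graphlike structures; specialised to the opetopic complexes in Section~\ref{S5} the "graphlike structures" become precisely "treelike structures", and the theorem then follows by matching these trees with the KJBM definition recalled in Section~\ref{S3}. So the skeleton is: (i) recall the combinatorial definition; (ii) invoke Theorem~\ref{T4.1} to get trees out of the complexes; (iii) check the trees are opetopic and the correspondence is bijective and natural; (iv) handle thin elements / nullary nodes and the reducedness normalisation; (v) conclude the equivalence.
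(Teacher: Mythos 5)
Your final skeleton --- use the general Theorem~\ref{T4.1} to extract graphlike structures from the complex, specialise to the opetopic case, and match the result against the combinatorial definition --- is exactly the architecture of the paper (Theorems \ref{T5.2} and~\ref{T5.3}), and your identification of the main obstacle (that $\d\d=0$ plus atomicity must encode precisely the matching conditions of the pasting data) is accurate: in the paper this is the verification that the constellation condition of Definition~\ref{D3.5} corresponds to $\d^+b$ being a single basis element, with $\d\d^-b=\d\d^+b$ deduced from confluence of the networks. Where you genuinely diverge is in the main development: you propose an induction on dimension, peeling off a target $(n-1)$-opetope and a source pasting diagram from the top cell $g$. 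The paper does no such induction. It works with the whole sequence of networks $N_0\to\ldots\to N_n$ at once, setting up a single global dictionary: $q$-dimensional basis elements correspond to edges of~$N_q$, vertices of~$N_q$ to input edges of~$N_{q+1}$ via the constellation, and $\d^\pm b$ are read off from the set $V_e$ of vertices of $N_q$ lying under the input edges feeding into~$e$. The global route buys a cleaner statement and avoids a real complication in yours: the source of an opetope is not an opetope but a pasting diagram of opetopes, so your induction hypothesis would have to be strengthened to cover non-atomic complexes (or trees of opetopes) as well, and the reducedness bookkeeping for thin elements would have to be threaded through that strengthened hypothesis --- which is essentially why the paper separates ``opetopic sequences $\leftrightarrow$ opetopic directed complexes'' from the reducedness condition, which is checked at the end by comparing Definitions \ref{D2.6} and~\ref{D3.4}. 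Also note that, as written, your text is a plan rather than a proof: the substantive verifications (the boundary formula, $\d\d=0$ from confluence, atomicity, loop-freeness from acyclicity of the networks, and uniqueness of~$K$) are named but not carried out, and the ``equivalence'' claimed is in fact only a correspondence up to isomorphism, not a stated equivalence of categories.
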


The conditions for unitality and loop-freeness are appropriate for free augmented directed complexes in general (see~\cite{B4}). For opetopic directed complexes they can be expressed more simply, because of the following result.

\begin{proposition} \label{P2.8}
Let $K$ be a free augmented directed complex such that $\d^+ a$ is a basis element whenever $a$~is a positive-dimensional basis element.

\textup{(1)} The complex~$K$ is unital if and only if $\e a=1$ for every zero-dimensional basis element~$a$.

\textup{(2)} The complex~$K$ is loop-free if and only if its basis elements can be partially ordered such that, for any basis element~$a$, the basis elements which are terms in $\d^- a$ precede the basis elements which are terms in $\d^+ a$.
\end{proposition}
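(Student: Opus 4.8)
The plan is to deduce both parts from a single ``quadratic'' identity. First I would record that, for any chain~$c$ of dimension at least~$2$, we have $\d\d c=0$, hence $\d\d^+ c=\d\d^- c$; comparing positive parts and negative parts (which are determined by a chain), and recalling that $\d^\pm x$ is by definition the positive or negative part of $\d x$, this yields
$$\d^+\d^+ c=\d^+\d^- c,\qquad \d^-\d^+ c=\d^-\d^- c .$$
Iterating this, and using the hypothesis that $\d^+ b$ is a basis element for every positive-dimensional basis element~$b$ — so that $(\d^+)^s a$ is a basis element for $0\le s\le\dim a$ — I would prove by induction on~$r$ that, for a basis element~$a$ of dimension~$q$ and $1\le r\le q$,
$$(\d^-)^r a=\d^-\bigl((\d^+)^{r-1}a\bigr),\qquad (\d^+)^r a=\d^+\bigl((\d^+)^{r-1}a\bigr) ,$$
the inductive step applying the second quadratic identity to the basis element $(\d^+)^{r-2}a$, which has dimension at least~$2$. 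Thus $(\d^-)^r a$ and $(\d^+)^r a$ are exactly the negative and positive parts of $\d e$, where $e=(\d^+)^{r-1}a$ is a \emph{single} basis element.

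For part~(1), the ``only if'' direction is immediate, since in dimension $q=0$ unitality says precisely $\e a=\e(\d^-)^0 a=1$. Conversely, assuming $\e a=1$ for every zero-dimensional basis element~$a$, let $a$ have dimension~$q$; if $q=0$ there is nothing to prove, and if $q\ge 1$ I would set $e=(\d^+)^{q-1}a$, a one-dimensional basis element. Then $(\d^+)^q a=\d^+ e$ is a zero-dimensional basis element, so $\e(\d^+)^q a=1$ by hypothesis; and since $\e\d e=0$ we also get $\e(\d^-)^q a=\e\d^- e=\e\d^+ e=1$. Hence $K$ is unital.

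For part~(2), the ``only if'' direction is Definition~\ref{D2.3} specialised to $r=1$ (where $e=a$). Conversely, suppose the basis elements carry a partial order in which, for every basis element~$a$, the terms of $\d^- a$ precede those of $\d^+ a$. Given any basis element~$a$ of dimension~$q$ and any integer~$r$ with $0<r\le q$, the identities above identify the terms of $(\d^-)^r a$ with the terms of $\d^- e$ and the terms of $(\d^+)^r a$ with the terms of $\d^+ e$, where $e=(\d^+)^{r-1}a$ is again a basis element; applying the hypothesis to~$e$ shows that every term of $(\d^-)^r a$ precedes every term of $(\d^+)^r a$. So the very same partial order exhibits $K$ as loop-free in the sense of Definition~\ref{D2.3}.

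The substantive step is the iterated identity of the first paragraph, and the only thing that really needs care there is the bookkeeping of dimensions: one must check that $(\d^+)^{r-2}a$ is a basis element of dimension at least~$2$ (so that the quadratic identity applies to it) and that $e=(\d^+)^{r-1}a$ is genuinely a single basis element rather than a sum, which is exactly where the hypothesis on $\d^+$ is used. Everything else is formal.
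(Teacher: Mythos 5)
Your proof is correct and takes essentially the same route as the paper's: both arguments reduce everything to the identity $(\d^-)^r a=\d^-\bigl((\d^+)^{r-1}a\bigr)$, obtained from $\d\d=0$ (hence $\d^-\d^-=\d^-\d^+$) together with the hypothesis that $\d^+$ of a positive-dimensional basis element is again a basis element, and then apply $\e\d=0$ for part (1) and the $r=1$ ordering condition at $a'=(\d^+)^{r-1}a$ for part (2).
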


\begin{proof}
(1) If $K$~is unital then certainly $\e a=1$ for every zero-dimensional basis element~$a$.

Conversely, suppose that $\e a=1$ for every zero-dimensional basis element~$a$. We must show that $\e(\d^-)^q b=\e(\d^+)^q b=1$ for a basis element~$b$ of arbitrary dimension~$q$. Now, it follows from the hypothesis that $(\d^+)^q b$ is a basis element; therefore $\e(\d^+)^q b=1$. From the equality $\d\d=0$ we get $\d\d^-=\d\d^+$, hence $\d^-\d^-=\d^-\d^+$, and from the equality $\e\d=0$ we similarly get $\e\d^-=\e\d^+$; therefore $\e(\d^-)^q b=\e(\d^+)^q b$, so that $\e(\d^-)^q b=1$ as well.

(2) It suffices to prove the following: if $a$~is a $q$-dimensional basis element and if $0<r\leq q$, then there is a basis element~$a'$ such that $(\d^-)^r a=\d^- a'$ and $(\d^+)^r a=\d^+ a'$. But we can take $a'=(\d^+)^{r-1}a$, which is a basis element by hypothesis; we certainly have $(\d^+)^r a=\d^+ a'$, and we also have $(\d^-)^r a=\d^- a'$ because $\d^-\d^-=\d^-\d^+$.
\end{proof}

\begin{table}

$$\begin{array}{||l|c|l|l||l|c|l|l||}
\hline\hline
a&\dim a&\d^- a&\d^+ a&b&\dim b&\d^- b&\d^+ b\\
\hline\hline
a_{17}&5&a_{16}+a_{15}&b_{17}&
b_{17}&4&a_{12}+a_{11}&b_{13}\\
&&\quad {}+a_{14}+a_{13}&&&&\quad {}+a_{10}+a_{9.5}&\\
&&&&&&\quad {}+a_9+a_8&\\ 
\hline
a_{16}&4&a_{9.5}&b_{16}&b_{16}&3&b_{12}&b_{9.5}\\
a_{15}&&a_{11}+a_9&b_{15}&b_{15}&&a_5+b_{9.5}&b_9\\
a_{14}&&a_{10}+a_8&b_{14}&b_{14}&&a_{5.5}+b_9&b_8\\
a_{13}&&a_{12}+b_{16}&b_{13}&b_{13}&&a_7+a_6&b_8\\
&&\quad {}+b_{15}+b_{14}&&&&\quad {}+a_{5.5}+a_5&\\ 
\hline
a_{12}&3&a_7+a_6&b_{12}&b_{12}&2&a_3+a_2&b_6\\
a_{11}&&a_5&b_{11}&b_{11}&&b_6+b_{5.5}&b_5\\
a_{10}&&a_{5.5}&b_{10}&b_{10}&&a_4&b_{5.5}\\
a_{9.5}&&b_{12}&b_{9.5}&b_{9.5}&&a_3+a_2&b_6\\
a_9&&b_{11}+b_{9.5}&b_9&b_9&&a_3+a_2+b_{5.5}&b_5\\
a_8&&b_{10}+b_9&b_8&b_8&&a_4+a_3+a_2&b_5\\
\hline
a_7&2&a_3&b_7&b_7&1&b_4&b_3\\
a_6&&a_2+b_7&b_6&b_6&&b_4&b_2\\
a_{5.5}&&a_4&b_{5.5}&b_{5.5}&&a_1&b_4\\
a_5&&b_6+b_{5.5}&b_5&b_5&&a_1&b_2\\
\hline
a_4&1&a_1&b_4&b_4&0&0&0\\
a_3&&b_4&b_3&b_3&&0&0\\
a_2&&b_3&b_2&b_2&&0&0\\
\hline
a_1&0&0&0&&&&\\
\hline\hline
\end{array}$$ 
\caption{A reduced opetopic directed complex}

\end{table}

\begin{example} \label{E2.9}
We will now describe the reduced opetopic directed complex~$K$ corresponding to the $5$-dimensional opetope of \cite{B2},~5.9, which is shown in Figure~2 (this example is discussed further at the end of section~\ref{S3}). In this opetope there are dots and spheres numbered $1$~to~$16$ (the spheres being drawn as circles), and in~$K$ there are corresponding basis elements~$a_i$ for $1\leq i\leq 16$. These are the basis elements of dimension less than~$5$ which are not thin and are not of the form $\d^+ a$ for any basis element~$a$. The remaining basis elements are as follows: a $5$-dimensional basis element denoted~$a_{17}$; two thin basis elements denoted $a_{5.5}$~and~$a_{9.5}$; the basis elements $\d^+ a_i$ for $\dim a_i>0$, which are denoted~$b_i$. The dimensions and boundaries of the basis elements are shown in Table~1. The augmentation is given by
$$\e a_1=\e b_4=\e b_3=\e b_2=1.$$
The only thin basis elements are $a_{9.5}$~and~$a_{5.5}$.

Note that the terms of $\d^- a_i$ are of the form~$a_j$ or of the form~$b_k$ with $k>i$, and note also that $\d b_i=\d\d^- a_i$ for $\dim a_i>0$. From these observations, $\d\d a_i=0$ for all~$i$, and it then follows by downward induction on~$i$ that $\d\d b_i=0$ for $\dim a_i>0$. It is now easy to see that $K$~is a free augmented directed complex, and it follows from Proposition~\ref{P2.8}(1) that $K$~is unital. For any basis element~$c$, the terms of $\d^- c$ come before those of $\d^+ c$ in the list
$$a_{17},\ldots,a_1,b_{17},\ldots,b_2;$$
hence, by Proposition~\ref{P2.8}(2), $K$~is also loop-free. It is is now easy to see that $K$~really is a reduced opetopic directed complex.
\end{example}

\section{A combinatorial description of opetopes} \label{S3}

We will now recall the combinatorial definition of opetopes given in \cite{B2}. This definition involves trees whose edges and vertices approximately correspond to the basis elements in the associated reduced opetopic directed complex. We will reformulate the definition so that the correspondence becomes exact; this makes the theory easier to generalise and also seems to make it somewhat simpler. 

The basic concept in~\cite{B2} is a sequence of \emph{subdivided trees} and \emph{constellations} (see \cite{B2},~1.19). A subdivided tree here means a connected and simply connected finite graph together with certain distinguished sets of vertices as follows: there is a vertex of valency~$1$ called the \emph{output vertex}; every other vertex of valency~$1$ is either an \emph{input vertex} or a \emph{null-dot}; there is a distinguished class of vertices of valency~$2$ called \emph{white dots}. We will regard these subdivided trees as directed graphs such that there are directed paths to the output vertex from every other vertex (there is clearly a unique directed graph structure with this property); in this way, for every edge~$e$, one of the end-points of~$e$ becomes its \emph{source} and the other end-point becomes its \emph{target}. We will then remove the output vertex and the input vertices; the result is like a directed graph except that there may be edges with only one end-point, and there may even be edges with no end-points at all; a structure of this kind will be called a \emph{network}. To each null-dot~$v$ we then add an edge with target~$v$ and with no source. The result of all this is a network with a distinguished class of \emph{thin edges} (the edges added to the null-dots) and a distinguished class of \emph{thin vertices} (the white-dots). In these terms, the definition of opetopes comes out as follows.

\begin{definition} \label{D3.1}
A \emph{network} is a structure consisting of two finite sets $E$~and~$V$, together with two subsets $I$~and~$O$ of~$E$ and two functions
$$s\colon E\setminus I\to V,\quad t\colon E\setminus O\to V.$$
The members of~$E$ are called \emph{edges}, the members of~$V$ are called \emph{vertices}, the members of~$I$ are called \emph{input edges}, and the members of~$O$ are called \emph{output edges}. If $e$~is an edge which is not an input edge, then $s(e)$ is called the \emph{source} of~$e$; if $e$~is an edge which is not an output edge, then $t(e)$ is called the \emph{target} of~$e$.
\end{definition}

\begin{definition} \label{D3.2}
A \emph{path} in a network from an edge~$e'$ to an edge~$e$ is a sequence
$$e_0,v_1,e_1,\dots,v_k,e_k,$$
consisting of edges~$e_r$ and vertices~$v_r$, such that $v_r$ is the target of~$e_{r-1}$ and the source of~$e_r$ and such that $e_0=e'$, $e_k=e$.

A network is \emph{acyclic} if contains no path from any edge~$e$ to itself other than the single-term path~$e$.

A network is \emph{linear} if it is an acyclic network consisting of a single path.
\end{definition}

\begin{definition} \label{D3.3}
An \emph{opetopic network} is an acyclic network with a unique output edge, together with distinguished sets of edges and vertices called \emph{thin edges} and \emph{thin vertices}, satisfying the following conditions.

\textup{(1)} Every thin edge is an input edge.

\textup{(2)} Every thin vertex is the target for a unique edge, and this edge is not thin.
\end{definition}

\begin{definition} \label{D3.4}
An opetopic network is \emph{reduced} if every thin edge has a target vertex, and this vertex is not the target for any other edge.
\end{definition}

\begin{definition} \label{D3.5}
Let $N$~and~$P$ be opetopic networks. A \emph{constellation} from~$N$ to~$P$ is a bijection~$c$ from the set of vertices in~$N$ to the set of input edges in~$P$ such that the thin vertices of~$N$ correspond to the thin edges of~$P$ and such that the following condition holds: if $e$~is an edge in~$P$ and if $I_e$ is the set of input edges~$e'$ in~$P$ such that there is a path in~$P$ from~$e'$ to~$e$, then there is exactly one edge in~$N$ with a source but not a target in $c^{-1}(I_e)$.
\end{definition}

\begin{definition} \label{D3.6}
An $n$-dimensional \emph{opetopic sequence} is a sequence of opetopic networks and constellations
$$N_0\to N_1\to\ldots\to N_n$$
such that $N_0$~is linear with no thin edges and such that $N_n$~consists of a single edge.
\end{definition}

\begin{definition} \label{D3.7}
An $n$-dimensional \emph{opetope} is a sequence of reduced opetopic networks and constellations
$$N_0\to N_1\to\ldots\to N_n$$
such that $N_0$~is linear with no thin edges and such that $N_n$~consists of a single edge.
\end{definition}

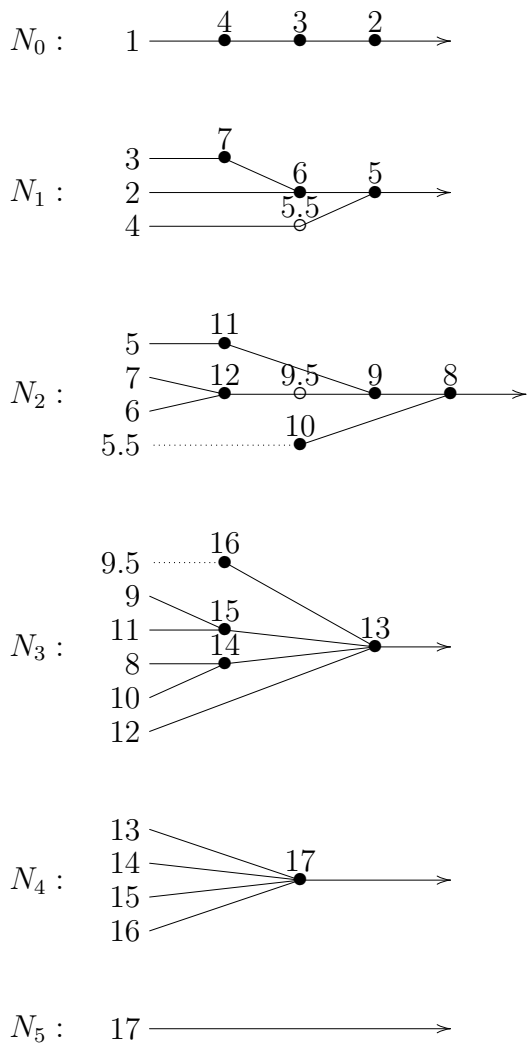
\begin{figure} 
\begin{align*}
&\quad\begin{xy}
0;<0.5cm,0cm>:
(-3,0) *{N_0:},
(0,0) *+!R{1},
(2,0) *{\bullet} *+!D{4},
(4,0) *{\bullet} *+!D{3},
(6,0) *{\bullet} *+!D{2},
(0,0); (2,0) **@{-}, 
(2,0); (4,0) **@{-}, 
(4,0); (6,0) **@{-}, 
(6,0); (8,0) **@{-} *\dir{>}
\end{xy}\\\\
&\quad\begin{xy}
0;<0.5cm,0cm>:
(-3,0.9) *{N_1:},
(0,1.8) *+!R{3},
(0,0.9) *+!R{2},
(0,0) *+!R{4},
(2,1.8) *{\bullet} *+!D{7},
(4,0.9) *{\bullet} *+!D{6},
(4,0) *{\circ} *+!D{5.5},
(6,0.9) *{\bullet} *+!D{5},
(0,1.8); (2,1.8) **@{-}, 
(0,0.9); (4,0.9) **@{-}, 
(0,0); (4,0) **@{-}, 
(2,1.8); (4,0.9) **@{-}, 
(4,0.9); (6,0.9) **@{-}, 
(4,0); (6,0.9) **@{-}, 
(6,0.9); (8,0.9) **@{-} *\dir{>}
\end{xy}\\\\
&\quad\begin{xy}
0;<0.5cm,0cm>:
(-3,1.35) *{N_2:},
(0,2.7) *+!R{5},
(0,1.8) *+!R{7},
(0,0.9) *+!R{6},
(0,0) *+!R{5.5},
(2,2.7) *{\bullet} *+!D{11},
(2,1.35) *{\bullet} *+!D{12},
(4,1.35) *{\circ} *+!D{9.5},
(4,0) *{\bullet} *+!D{10},
(6,1.35) *{\bullet} *+!D{9},
(8,1.35) *{\bullet} *+!D{8},
(0,2.7); (2,2.7) **@{-},
(0,1.8); (2,1.35) **@{-},
(0,0.9); (2,1.35) **@{-},
(0,0); (4,0) **@{.},
(2,2.7); (6,1.35) **@{-},
(2,1.35); (4,1.35) **@{-},
(4,1.35); (6,1.35) **@{-},
(4,0); (8,1.35) **@{-},
(6,1.35); (8,1.35) **@{-},
(8,1.35); (10,1.35) **@{-} *\dir{>}
\end{xy}\\\\
&\quad\begin{xy}
0;<0.5cm,0cm>:
(-3,2.25) *{N_3:},
(0,4.5) *+!R{9.5},
(0,3.6) *+!R{9},
(0,2.7) *+!R{11},
(0,1.8) *+!R{8},
(0,0.9) *+!R{10},
(0,0) *+!R{12},
(2,4.5) *{\bullet} *+!D{16},
(2,2.7) *{\bullet} *+!D{15},
(2,1.8) *{\bullet} *+!D{14},
(6,2.25) *{\bullet} *+!D{13},
(0,4.5); (2,4.5) **@{.},  
(0,3.6); (2,2.7) **@{-}, 
(0,2.7); (2,2.7) **@{-},  
(0,1.8); (2,1.8) **@{-},  
(0,0.9); (2,1.8) **@{-},  
(0,0); (6,2.25) **@{-},  
(2,4.5); (6,2.25) **@{-},  
(2,2.7); (6,2.25) **@{-},  
(2,1.8); (6,2.25) **@{-},  
(6,2.25); (8,2.25) **@{-} *\dir{>}  
\end{xy}\\\\
&\quad\begin{xy}
0;<0.5cm,0cm>:
(-3,1.35) *{N_4:},
(0,2.7) *+!R{13},
(0,1.8) *+!R{14},
(0,0.9) *+!R{15},
(0,0) *+!R{16},
(4,1.35) *{\bullet} *+!D{17},
(0,2.7); (4,1.35) **@{-},  
(0,1.8); (4,1.35) **@{-},  
(0,0.9); (4,1.35) **@{-},  
(0,0); (4,1.35) **@{-},  
(4,1.35); (8,1.35) **@{-} *\dir{>}  
\end{xy}\\\\
&\quad\begin{xy}
0;<0.5cm,0cm>:
(-3,0) *{N_5:},
(0,0) *+!R{17},
(0,0); (8,0) **@{-} *\dir{>}
\end{xy}
\end{align*}
\caption{An opetope represented as a sequence of networks}
\end{figure}

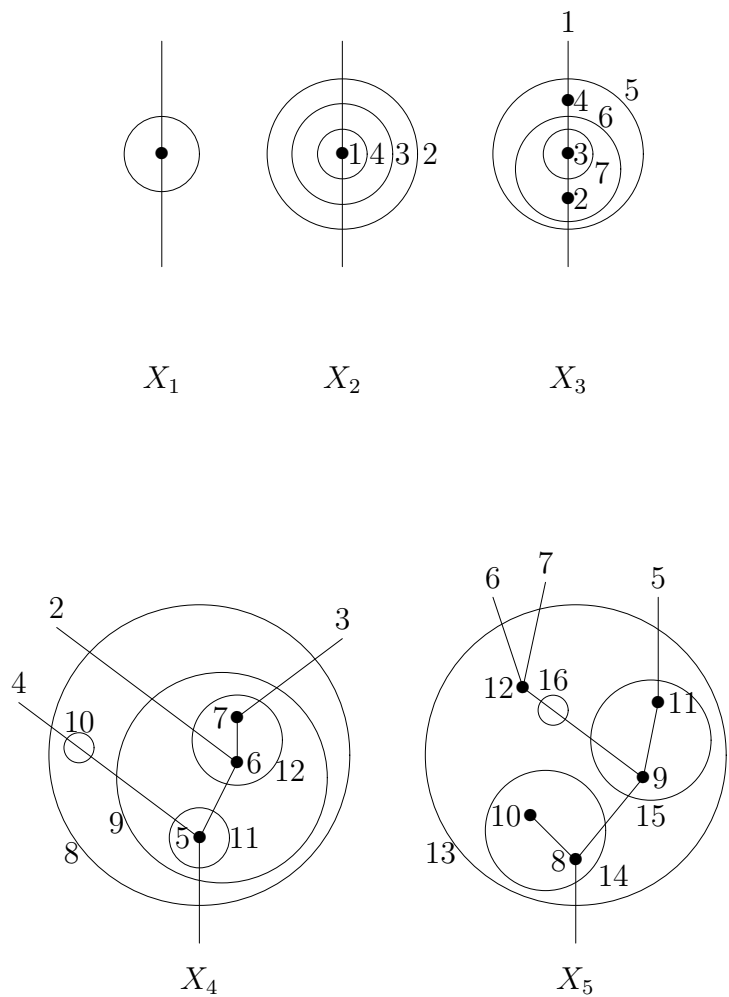
\begin{figure} 
$$\begin{xy}
0;<1cm,0cm>:
(7.5,11) *\cir<0.5cm>{},
(7.5,11) *{\bullet},
(7.5,12.5); (7.5,9.5) **@{-},
(9.9,11) *\cir<1cm>{} *\cir<0.67cm>{} *\cir<0.33cm>{},
(9.9,11) *{\bullet}, (10.07,11) *{1},
(9.9,12.5); (9.9,9.5) **@{-},
(11.07,11) *{2},
(10.7,11) *{3},
(10.36,11) *{4},
(12.9,11) *\cir<1cm>{} *\cir<0.33cm>{},
(12.9,10.8) *\cir<0.7cm>{},
(12.9,11.7) *{\bullet}, (13.07,11.7) *{4},
(12.9,11) *{\bullet}, (13.07,11) *{3},
(12.9,10.4) *{\bullet}, (13.07,10.4) *{2},
(12.9,12.5); (12.9,9.5) **@{-},
(12.9,12.5) *+!D{1},
(13.55,11.65), (13.75,11.85) *{5},
(13.2,11.3), (13.4,11.5) *{6},
(13.05,10.85), (13.35,10.75) *{7},
(8,3) *\cir<2cm>{},
(8.3,2.7) *\cir<1.4cm>{},
(6.4,3.1) *\cir<0.2cm>{},
(8.5,3.2) *\cir<0.6cm>{},
(8,1.9) *\cir<0.4cm>{},
(8,1.9) *{\bullet} *+!R{5},
(8.5,2.9) *{\bullet} *+!L{6},
(8.5,3.5) *{\bullet} *+!R{7},
(8,1.9); (8,0.5) **@{-},
(8,1.9); (8.5,2.9) **@{-},
(8.5,2.9); (8.5,3.5) **@{-},
(8,1.9); (5.6,3.7) **@{-},
(8.5,2.9); (6.1,4.7) **@{-},
(8.5,3.5); (9.9,4.55) **@{-},
(5.6,3.7) *+!D{4},
(6.1,4.7) *+!D{2},
(9.9,4.55) *+!D{3},
(6.3,1.7) *{8},
(6.9,2.1) *{9},
(6.4,3.45) *{10},
(8.6,1.9) *{11},
(9.2,2.8) *{12},
(13,3) *\cir<2cm>{},
(12.6,2) *\cir<0.8cm>{},
(14,3.2) *\cir<0.8cm>{},
(12.7,3.6) *\cir<0.2cm>{},
(13,1.6) *{\bullet} *+!R{8},
(13.9,2.7) *{\bullet} *+!L{9},
(12.4,2.2) *{\bullet} *+!R{10},
(14.1,3.7) *{\bullet} *+!L{11},
(12.3,3.9) *{\bullet} *+!R{12},
(13,1.6); (13,0.5) **@{-},
(13,1.6); (12.4,2.2) **@{-},
(13,1.6); (13.9,2.7) **@{-},
(13.9,2.7); (12.3,3.9) **@{-},
(13.9,2.7); (14.1,3.7) **@{-},
(14.1,3.7); (14.1,5.1) **@{-},
(12.3,3.9); (11.9,5.1) **@{-},
(12.3,3.9); (12.6,5.3) **@{-},
(14.1,5.1) *+!D{5},
(11.9,5.1) *+!D{6},
(12.6,5.3) *+!D{7},
(11.2,1.7) *{13},
(13.5,1.4) *{14},
(14,2.2) *{15},
(12.7,4) *{16},
(7.5,8) *{X_1},
(9.9,8) *{X_2},
(12.9,8) *{X_3},
(8,0) *{X_4},
(13,0) *{X_5}
\end{xy}$$
\caption{An opetope represented as a sequence of trees (extension of \cite{B2}, 5.9)}
\end{figure} 

We can represent opetopes by sequences of diagrams as in Figure~1, which shows the $5$-dimensional opetope of \cite{B2},~5.9 in terms of networks; it corresponds to the reduced opetopic directed complex of Example~\ref{E2.9}. Each vertex is the target of the edges to its left and the source of the edge to its right, so that the input edges are on the left and the output edges on the right. The thin edges and vertices are shown by dotted lines and hollow dots, and the constellations are shown by the numberings of the vertices and input edges. The constellation condition works out as follows: if $e$~is an input edge in~$N_{q+1}$ labelled~$i$, then the corresponding edge in~$N_q$ is the edge with source labelled~$i$; if $e$~is a non-input edge in~$N_{q+1}$ with source labelled~$i$, then the corresponding edge in~$N_q$ is the edge with source labelled~$j$ such that $\d^+ b_i=b_j$ in Table~1.

We will now show that Definition~\ref{D3.7} agrees with the definition in~\cite{B2}. First, as an example, we compare the networks of Figure~1 with the diagrams of~\cite{B2}, 5.9. These diagrams are as shown in Figure~2, except that the rather trivial diagram~$X_1$ is not actually printed in~\cite{B2}. Each diagram~$X_q$ represents two trees $T^*_q$~and~$T_q$ as follows: the tree~$T^*_q$~is the obvious tree consisting of the dots and edges in~$X_q$; the tree~$T_q$ has input edges and vertices corresponding to the dots and circles in~$X_q$, such that there is a path from an input edge to a vertex in~$T_q$ if and only if the corresponding dot is inside the corresponding circle in~$X_q$. One finds that $T^*_q$~is isomorphic to~$T_{q-1}$, so that the five diagrams $X_1,\ldots,X_5$ represent six trees $T_0,\ldots,T_5$ in a somewhat redundant way. The trees $T_2,T_3,T_4,T_5$ evidently correspond to the networks $N_0,N_1,N_2,N_3$, with the empty circles $10$~and~$16$ corresponding to the thin vertices $5.5$~and~$9.5$. The trees $T_0$~and~$T_1$ are redundant, because they must each have a single interior vertex and a single input edge. The network~$N_5$ is similarly redundant, because it must consist of a single edge, and the network~$N_4$ is then determined by $N_3$~and~$N_5$. The networks in Figure~1 are therefore equivalent to the diagrams in Figure~2.

We will now describe this equivalence in general. We first observe that the networks appearing in opetopes are \emph{confluent networks} in the following sense.

\begin{definition} \label{D3.8}
A \emph{confluent} network is an acyclic network with a unique output edge such that each vertex is the source of exactly one edge and the target of at least one edge.
\end{definition}

\begin{proposition} \label{P3.9}
Every network in an opetopic sequence is confluent.
\end{proposition}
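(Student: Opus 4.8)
The plan is to unwind Definition~\ref{D3.8} into its constituent parts and verify each of them for an arbitrary network $N_q$ in an opetopic sequence $N_0\to\dots\to N_n$ with constellations $c_q\colon V(N_q)\to I(N_{q+1})$. A confluent network is an acyclic network with a unique output edge in which every vertex is the source of exactly one edge and the target of at least one edge. By Definition~\ref{D3.6} each $N_q$ is an opetopic network, hence by Definition~\ref{D3.3} acyclic with a unique output edge, so the two assertions that remain to be proved are: \textup{(a)} every vertex of $N_q$ is the source of exactly one edge; and \textup{(b)} every vertex of $N_q$ is the target of at least one edge. Both will follow from the constellation condition of Definition~\ref{D3.5}, applied to edges $e$ for which the feeding set $I_e$ is transparent.

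For \textup{(a)}, let $v$ be a vertex of $N_q$ with $q<n$ and apply the constellation condition for $c_q$ to the input edge $e=c_q(v)$ of $N_{q+1}$. An input edge has no source, so no path of positive length can end at it; hence $I_e=\{e\}$ and $c_q^{-1}(I_e)=\{v\}$, and the constellation condition says that exactly one edge of $N_q$ has its source in $\{v\}$ but no target in $\{v\}$. An edge with source $v$ cannot have $v$ as a target, for that would give a length-one path from the edge to itself, contradicting acyclicity; so the clause about targets is automatic, and the condition reduces to the statement that $v$ is the source of exactly one edge. When $q=n$ the network $N_n$ has no vertices, so \textup{(a)} holds for every $q$.

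For \textup{(b)}, the case $q=0$ is immediate: $N_0$ is linear, that is, a single path, and every vertex of a path is the target of an edge. For $q\ge1$ I would argue by contradiction, using the constellation $c_{q-1}\colon V(N_{q-1})\to I(N_q)$. Suppose some vertex $w$ of $N_q$ is the target of no edge. By \textup{(a)} it is the source of a unique edge $f$; since $f$ has a source it is not an input edge, so any path ending at $f$ that begins at an input edge has positive length, and the edge immediately before $f$ in such a path would have target $s(f)=w$. As $w$ is the target of nothing, no such path exists, whence $I_f=\emptyset$ and $c_{q-1}^{-1}(I_f)=\emptyset$. But then the constellation condition for $c_{q-1}$ applied to $e=f$ requires exactly one edge of $N_{q-1}$ whose source lies in $\emptyset$, which is impossible. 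Hence $w$ is the target of at least one edge, and \textup{(b)} holds.

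There is no real induction here: the whole content is the constellation axiom read off in the two extreme cases where $I_e$ is a singleton or is empty. The points needing care are purely bookkeeping, namely treating the ends $N_0$ (linear) and $N_n$ (a single edge, hence without vertices) correctly and making sure \textup{(a)} is available before it is used in \textup{(b)}; the one observation on which everything turns is that the constellation axiom can never be satisfied vacuously, because there is no edge whose source lies in the empty set. I expect that recognising this is the only genuine step, and that everything else is a direct verification.
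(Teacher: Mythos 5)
Your proof is correct and takes essentially the same route as the paper: acyclicity and the unique output edge come from Definition~\ref{D3.3}, the ``source of exactly one edge'' claim from the constellation condition applied to the input edge $c_q(v)$ of $N_{q+1}$ (where $I_e$ is a singleton), and the ``target of at least one edge'' claim from the constellation condition for $c_{q-1}$ applied to the unique edge out of $v$ (which forces $I_{e}$ to be nonempty). The only cosmetic differences are that the paper disposes of the vertices of $N_0$ directly by linearity and states the second step positively rather than by contradiction.
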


\begin{proof}
Consider an opetopic sequence $N_0\to\ldots\to N_n$. By definition, each network~$N_q$ is acyclic with a unique output edge. There are no vertices in~$N_n$. If $v$~is a vertex in~$N_0$ then $v$~is the source for exactly one edge and the target for exactly one edge, because $N_0$~is linear. If $v$~is a vertex in~$N_q$ with $0<q<n$, then $v$~corresponds to an input edge~$e$ in~$N_{q+1}$, the constellation condition applied to~$e$ shows that $v$~is the source for exactly one edge~$e'$, and the constellation condition applied to~$e'$ shows that $v$~is the target for at least one edge. In all cases, it follows that $N_q$~is confluent.
\end{proof}

It follows from Proposition~\ref{P3.9} that an opetope as defined in Definition~\ref{D3.7} is a sequence of confluent reduced opetopic networks and constellations. Sequences of confluent reduced opetopic networks and constellations correspond to sequences of subdivided trees and constellations as described in \cite{B2},~1.19. We will now make a precise comparison by considering various sequences of confluent reduced opetopic networks and constellations as follows.

Let $E$ be an opetopic network consisting of a single non-thin edge. An $n$-dimensional opetope as defined in Definition~\ref{D3.7} is a sequence $N_0\to\ldots\to N_n$ such that $N_0$~is linear with no thin edges and such that $N_n\cong E$.

Let $L$ be a linear opetopic network consisting of two non-thin edges and a single non-thin vertex. For an arbitrary confluent reduced opeto\-pic network~$N$ there can be at most one sequence of constellations 
$L\to L\to N$, 
and such a sequence exists if and only if $N$~is linear with no thin edges. An $n$-dimensional opetope as defined in Definition~\ref{D3.7} is therefore equivalent to a sequence $N_{-2}\to\ldots\to N_n$ such that $N_q\cong L$ for $q<0$ and such that $N_n\cong E$. 

Now let $L'$ be a linear opetopic network consisting of two non-thin edges and a single thin vertex. For an arbitrary confluent reduced opetopic network~$N$, there can be up to isomorphism at most one sequence $N\to N'\to E$ with $N'$ confluent and reduced: indeed, $N'$~must consist of a single non-thin vertex, a single non-thin output edge, and of input edges corresponding to the vertices of~$N$. A sequence of this kind exists if  $N\cong L'$ or if $N$~has no thin vertices and at least one non-thin vertex, but not otherwise. An $n$-dimensional opetope as defined in Definition~\ref{D3.7} is therefore equivalent to a sequence $N_{-2}\to\ldots\to N_{n-2}$ such that $N_q\cong L$ for $q<0$ and such that $N_{n-2}\cong L'$ or $N_{n-2}$ has no thin vertices and at least one non-thin vertex.

Finally, for an arbitrary confluent reduced opetopic network~$N$, there is a constellation $N\to L'$ if and only if there is a constellation $N\to E$, and there is at most one such constellation in each case. An $n$-dimensional opetope as defined in Definition~\ref{D3.7} is therefore equivalent to a sequence $N_{-2}\to\ldots\to N_{n-2}$ such that $N_q\cong L$ for $q<0$ and such that $N_{n-2}$~has no thin vertices. These sequences exactly correspond to $n$-dimensional opetopes $T_0\to\ldots\to T_n$ as defined in \cite{B2}, 1.19--1.20.

\section{Networks associated to augmented directed complexes} \label{S4}

In this section we give a general result showing that loop-free unital free augmented directed complexes generate families of networks. We use the functor~$\nu$ from free  augmented directed complexes to strict $\omega$-categories, which is fully faithful on the subcategory of loop-free unital free augmented directed complexes (see~\cite{B4}).

Let $K$ be a free augmented directed complex. Recall that the members of $\nu K$ are the double sequences
$$(\,x_0^-,x_0^+\mid x_1^-,x_1^+\mid\ldots\,)$$
such that $x_q^-$~and~$x_q^+$ are sums of $q$-dimensional basis elements, such that finitely many terms of the double sequence are non-zero, such that 
$$\e x_0^-=\e x_0^+=1,$$
and such that
$$\d x_q^-=\d x_q^+=x_{q-1}^+-x_{q-1}^-$$
for $q>0$.

Now let
$$x=(\,x_0^-,x_0^+\mid\ldots\,)$$
be a member of $\nu K$, let $q$ be a nonnegative integer, and let $\alpha$ be a sign. We write  $x_{q+1}^\alpha$ as a sum of basis elements, say
$$x_{q+1}^\alpha=a_1+\ldots+a_k,$$
and we then define a $q$-chain $g_q^\alpha(x)$ by the formulae
$$g_q^\alpha(x)
 =x_q^-+\d^+a_1+\ldots+\d^+a_k
 =\d^-a_1+\ldots+\d^-a_k+x_q^+$$
(the two expressions for $g_q^\alpha(x)$ take the same value because $\d x_{q+1}^\alpha=x_q^+-x_q^-$).

The main result is now as follows.

\begin{theorem} \label{T4.1}
Let $K$ be a loop-free unital free augmented directed complex.

\textup{(1)} If $x=(\,x_0^-,x_0^+\mid\ldots\,)$ is a member of $\nu K$, then $x_0^-$~and~$x_0^+$ are basis elements.

\textup{(2)} If $x=(\,x_0^-,x_0^+\mid\ldots\,)$ is a member of $\nu K$ and if $c$~is $x_q^-$, $x_q^+$, $g_q^-(x)$ or $g_q^+(x)$, then $c$~is a sum of distinct basis elements.
\end{theorem}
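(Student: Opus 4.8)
The plan is to deduce~(1) at once from unitality, and then to establish~(2) by induction on~$q$, the real content being a statement about the chains $g_q^\alpha(x)$ rather than about the $x_q^\pm$ directly. No purely local argument about boundaries can succeed: in a loop-free unital complex there are nonnegative chains~$c$ for which $\d^-c$ and $\d^+c$ are both sums of distinct basis elements while~$c$ itself is not, so the whole system of relations $\d x_r^\pm=x_{r-1}^+-x_{r-1}^-$, $\e x_0^\pm=1$ defining a member of~$\nu K$ must be brought to bear. This is very much in the spirit of the structural results on loop-free complexes in~\cite{B4}.

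\emph{Part~(1).} By Definition~\ref{D2.2} applied with $q=0$, $\e a=1$ for every zero-dimensional basis element~$a$. Since $x_0^-$ is a nonnegative integral combination of such elements and $\e x_0^-=1$, precisely one of them occurs, with coefficient~$1$; hence $x_0^-$, and likewise $x_0^+$, is a basis element.

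\emph{Reduction for Part~(2).} I would prove by induction on~$q$ that $g_q^-(x)$ and $g_q^+(x)$ are sums of distinct basis elements for every $x\in\nu K$. Granting this, the assertion about $x_q^\pm$ follows: for $q=0$ it is Part~(1), while for $q\geq1$, writing $x_q^\alpha=a_1+\dots+a_k$, we have $g_{q-1}^\alpha(x)=x_{q-1}^-+\d^+a_1+\dots+\d^+a_k$ by definition, so $\d^+a_1+\dots+\d^+a_k=g_{q-1}^\alpha(x)-x_{q-1}^-$ is again a sum of distinct basis elements (it is nonnegative and coefficientwise dominated by $g_{q-1}^\alpha(x)$); as $\d^+a_i\neq0$ for each~$i$ (unitality gives $\e(\d^+)^qa_i=1$), no basis element can be repeated among the~$a_i$, and $x_q^\pm$ is a sum of distinct basis elements. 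Applied to the atom on a basis element~$a$ of dimension $q+1$ (which lies in $\nu K$), this shows moreover that $\d^+a$ and $\d^-a$ are sums of distinct basis elements — a point needed in the next paragraph.

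\emph{Inductive step for~$g$, and the main obstacle.} Write $x_{q+1}^\alpha=a_1+\dots+a_k$, so that
$$g_q^\alpha(x)=x_q^-+\d^+a_1+\dots+\d^+a_k=x_q^++\d^-a_1+\dots+\d^-a_k .$$
By the inductive hypothesis $x_q^\pm$ and each of $\d^+a_i$, $\d^-a_i$ are sums of distinct basis elements, and $\d^+a_i$ and $\d^-a_i$ have disjoint supports. If some $q$-dimensional basis element~$e$ were to occur with coefficient $\geq2$ in $g_q^\alpha(x)$, then comparing its coefficient in the two displayed expressions would force~$e$ to be a positive face of some~$a_i$ and, at the same time, a negative face of some~$a_j$. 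I would then pick such an~$e$ that is maximal for the loop-free partial order and use the defining inequalities of loop-freeness (the terms of $(\d^-)^ra$ precede those of $(\d^+)^ra$) together with the relations $\d x_r^\pm=x_{r-1}^+-x_{r-1}^-$ which guarantee that~$x$ is an honest member of~$\nu K$, the aim being to produce a basis element strictly above~$e$ inheriting the same doubling, contrary to maximality. An equivalent route is to adjoin the $(q+1)$-dimensional basis elements~$a_i$ one at a time in an order refining the partial order, and to verify that each $\d^+a_i$ then meets neither $x_q^-$ nor any previously adjoined $\d^+a_j$, so that no term of $g_q^\alpha(x)$ is created twice. Carrying either version out rigorously — showing that a hypothetical repeated term cannot stay confined but must travel monotonically along the partial order until it conflicts with one of the globular relations — is where the difficulty lies, since, as the chains~$c$ mentioned above demonstrate, counting boundary coefficients locally is not by itself enough.
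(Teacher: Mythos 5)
Your Part~(1) and your reduction of the statement about $x_q^\pm$ to the statement about $g_{q-1}^\pm$ are correct and essentially coincide with the paper's argument (the paper runs a single induction on~$q$ with the two cases interleaved, but that is only bookkeeping). The genuine gap is the inductive step for $g_q^\alpha(x)$, which is the entire substance of the theorem: you describe two strategies and state explicitly that carrying either out rigorously ``is where the difficulty lies''. Neither sketch, as it stands, leads anywhere: the maximality argument supplies no mechanism for propagating a doubled term upward along the partial order, and the one-at-a-time adjunction of the~$a_i$ simply restates the disjointness of supports that has to be proved.

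The missing idea is this. Order $x_{q+1}^\alpha=a_1+\dots+a_k$ so that $i<j$ whenever $\d^+a_i$ and $\d^-a_j$ share a term (possible by loop-freeness), and introduce the telescoping chains $y_r=x_q^-+\d a_1+\dots+\d a_r$, so that $y_0=x_q^-$ and $y_k=x_q^+$. For a fixed $q$-dimensional basis element~$b$ the ordering makes the sequence of coefficients of~$b$ in $y_0,\dots,y_k$ unimodal (all increments precede all decrements), so each $y_r$ is nonnegative and the coefficient of~$b$ in $g_q^\alpha(x)$ equals its maximal coefficient among the~$y_r$; hence it suffices to show that every $y_r$ has all coefficients at most~$1$. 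This is where the global relations defining $\nu K$ enter: since $\d y_r=x_{q-1}^+-x_{q-1}^-$ (resp.\ $\e y_r=1$ when $q=0$), the double sequence
$$(\,x_0^-,x_0^+\mid\ldots\mid x_{q-1}^-,x_{q-1}^+\mid y_r,y_r\mid 0,0\mid\ldots\,)$$
is itself a member of $\nu K$, and the already-available statement that the degree-$q$ components of any member of $\nu K$ are sums of distinct basis elements (which in your setup follows from the inductive hypothesis for $g_{q-1}^\pm$) applied to this new member gives the claim. In other words, the multiplicity-freeness of $g_q^\alpha(x)$ is obtained not by a combinatorial analysis of the boundaries of the~$a_i$ but by recognising each intermediate chain $y_r$ as a component of another element of $\nu K$ --- exactly the kind of non-local input your opening paragraph correctly predicted would be needed, but which your sketch does not supply.
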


\begin{proof}
(1) For each sign~$\alpha$ we can write~$x_0^\alpha$ as a sum of basis elements, say
$$x_0^\alpha=b_1+\ldots+b_l.$$
We have $\e x_0^\alpha=1$ because $x\in\nu K$, and we have $\e b_r=1$ for $1\leq r\leq l$ because $K$~is unital. Therefore $l=1$, which means that $x_0^\alpha$~is a basis element.

(2) We will use induction on~$q$. For each value of~$q$ we first consider chains of the form~$x_q^\alpha$ and then chains of the form $g_q^\alpha(x)$.

Suppose that $c=x_q^\alpha$ with $q=0$. Then $c$~is a basis element by~(1), so that $c$~is a sum of distinct basis elements in a somewhat trivial way.

Suppose that $c=x_q^\alpha$ with $q>0$. We can write~$x_q^\alpha$ as a sum of basis elements, say
$$x_q^\alpha=b_1+\ldots+b_l.$$
By unitality the chains $\d^+ b_r$ are non-zero, so they are non-empty sums of basis elements. The sum $\d^+ b_1+\ldots+\d^+ b_l$ is part of the sum $g_{q-1}^\alpha(x)$, which is a sum of distinct basis elements by the inductive hypothesis, so the terms $\d^+b_1,\ldots,\d^+b_l$ are distinct. Therefore the terms $b_1,\ldots,b_l$ are distinct, so that $x_q^\alpha$~is a sum of distinct basis elements.

Now let $c=g_q^\alpha(x)$. Because $K$~is loop-free we can write the chain~$x_{q+1}^\alpha$ as a sum of $(q+1)$-dimensional basis elements, say 
$$x_{q+1}^\alpha=a_1+\ldots+a_k,$$
such that $i<j$ whenever the sums of $q$-dimensional basis elements $\d^+a_i$ and $\d^- a_j$ have a common term. We then have
$$g_q^\alpha(x)
 =x_q^-+\d^+ a_1+\ldots+\d^+ a_k
 =\d^- a_1+\ldots+\d^- a_k+x_q^+.$$
For $0\leq r\leq k$, let
$$y_r=x_q^-+\d a_1+\ldots+\d a_r,$$
so that in particular
$$y_0=x_q^-,\ y_k=x_q^+.$$
For a $q$-dimensional basis element~$b$, let $\lambda_r^b$ be the coefficient of~$b$ in~$y_r$. The ordering of the basis elements~$a_r$ ensures that for some~$m$ we have
$$\textup{$\lambda_0^b\leq\lambda_1^b\leq\ldots\leq\lambda_m^b$ and  $\lambda_m^b\geq\lambda_{m+1}^b\geq\ldots\geq\lambda_k^b$.}$$
Since $y_0$~and~$y_k$ are sums of basis elements, so also is~$y_r$ for $0\leq r\leq k$. We also see that the coefficient of~$b$ in $g_q^\alpha(x)$ is~$\lambda_m^b$. It therefore suffices to show that each chain~$y_r$ is a sum of distinct basis elements. 

We now observe that $\d y_r=\d x_q^-=x_{q-1}^+-x_{q-1}^-$ in the case $q>0$, and that $\e y_r=\e x_q^-=1$ in the case $q=0$. It follows that there is an element of $\nu K$ given by
$$(\,x_0^-,x_0^+\mid\ldots
 \mid x_{q-1}^-,x_{q-1}^+\mid y_r,y_r\mid 0,0\mid\ldots\,).$$
By applying what we have already proved to this member of $\nu K$ we see that $y_r$~is a sum of distinct basis elements. This completes the proof.
\end{proof}

We can express this result in terms of networks.

\begin{theorem} \label{T4.2}
Let 
$$x=(\,x_0^-,x_0^+\mid\ldots\,)$$
be a member of $\nu K$, where $K$~is a loop-free unital free augmented directed complex.

\textup{(1)} There are networks $G_q^-(x)$ and $G_q^+(x)$ for $q\geq 0$ as follows: the edges of $G_q^\alpha(x)$ correspond to the $q$-dimensional basis elements which are terms in $g_q^\alpha(x)$. The vertices in $G_q^\alpha(x)$ correspond to the $(q+1)$-dimensional elements which are terms in~$x_{q+1}^\alpha$. If a vertex~$v$ in $G_q^\alpha(x)$ corresponds to a basis element~$a$, then the edges with target~$v$ correspond to the terms of $\d^- a$ and the edges with source~$v$ correspond to the terms of $\d^+ a$.

\textup{(2)} There exists~$n$ such that $G_q^\alpha(x)$ is empty for all $q>n$.

\textup{(3)} The networks $G_q^\alpha(x)$ are acyclic for all $q\geq 0$.

\textup{(4)} The networks $G_0^\alpha(x)$ are linear.

\textup{(5)} The input edges of $G_{q+1}^\alpha$ correspond to the terms of~$x_{q+1}^-$, and the output edges of $G_{q+1}^\alpha$ correspond to the terms of~$x_{q+1}^+$. These correspondences induce bijections from the vertices of $G_q^-(x)$ to the input edges of $G_{q+1}^\alpha$ and from the vertices of $G_q^+(x)$ to the output edges of $G_{q+1}^\alpha$.
\end{theorem}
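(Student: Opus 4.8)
The plan is to read all five assertions directly off the two displayed expressions for $g_q^\alpha(x)$, using Theorem~\ref{T4.1} as the only non-formal ingredient. For part~(1), write $x_{q+1}^\alpha=a_1+\dots+a_k$. Since $g_q^\alpha(x)=x_q^-+\d^+a_1+\dots+\d^+a_k$ is a sum of \emph{distinct} basis elements by Theorem~\ref{T4.1}(2), the term sets of $x_q^-$ and of the $\d^+a_i$ are pairwise disjoint; hence each edge (term of $g_q^\alpha(x)$) lies in at most one $\d^+a_i$, which makes the source map well defined, the input edges being exactly the terms of $x_q^-$. The target map and the output edges are obtained in the same way from $g_q^\alpha(x)=\d^-a_1+\dots+\d^-a_k+x_q^+$, and every term of each $\d^\pm a_i$ is indeed a term of $g_q^\alpha(x)$, hence an edge. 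Finiteness of the edge and vertex sets is immediate. Part~(2) follows at once: choosing $n$ with $x_m^\pm=0$ for all $m>n$, for $q>n$ there are no vertices and $g_q^\alpha(x)=x_q^-=0$, so no edges either.

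For part~(3), consider a path $e_0,v_1,e_1,\dots,v_k,e_k$ with $e_0=e_k$. If $v_r$ corresponds to the basis element $a$, then $e_{r-1}$ is a term of $\d^-a$ and $e_r$ a term of $\d^+a$, so loop-freeness (Definition~\ref{D2.3}, case $r=1$) makes $e_{r-1}$ precede $e_r$ in the chosen partial order. Closing the chain around $e_0=e_k$ then forces $e_0=e_1$ (or is outright impossible for $k\geq 1$ if precedence is strict); since $\d^-v_1$ and $\d^+v_1$ have no common term, this gives $k=0$, so $G_q^\alpha(x)$ is acyclic.

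For part~(4), I would first use unitality: for a $1$-dimensional basis element $a$ the augmentation condition forces $\d^-a$ and $\d^+a$ to be single $0$-dimensional basis elements, so in $G_0^\alpha(x)$ every vertex is the source of exactly one edge and the target of exactly one edge. By Theorem~\ref{T4.1}(1), $x_0^-$ and $x_0^+$ are basis elements, so by part~(1) there is exactly one input edge and exactly one output edge. Starting at the input edge and repeatedly following the unique edge out of each vertex, finiteness and acyclicity force the walk to terminate at the output edge, yielding a single path; tracing backwards from any hypothetical vertex or edge not on this path leads, via the unique-incoming-edge property of vertices, to a second source-free edge, which is impossible. Hence $G_0^\alpha(x)$ is this single path and is linear. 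Finally, part~(5) is pure identification: applying the part~(1) description of input and output edges to $G_{q+1}^\alpha(x)$ shows its input edges are the terms of $x_{q+1}^-$ and its output edges the terms of $x_{q+1}^+$, and these are literally the sets of basis elements that constitute the vertices of $G_q^-(x)$ and of $G_q^+(x)$ respectively, which yields the asserted bijections for both signs~$\alpha$.

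The routine parts are (1), (2), (3) and (5), which are bookkeeping once Theorem~\ref{T4.1} is in hand. The real content is part~(4): the linearity claim rests on the combinatorial lemma that a finite acyclic network in which every vertex has exactly one incoming and one outgoing edge, with a unique source-free edge and a unique target-free edge, is a single path — and the delicate point there is excluding extra connected components, which is exactly where uniqueness of the input edge (hence Theorem~\ref{T4.1}(1)) is used.
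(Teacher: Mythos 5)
Your proof is correct and follows essentially the same route as the paper, which simply declares parts (1), (2) and (5) obvious, attributes (3) to loop-freeness, and attributes (4) to the two facts you isolate (that $x_0^\pm$ are basis elements and that unitality makes $\d^\pm a$ a single basis element for $1$-dimensional $a$). Your write-up supplies the bookkeeping the paper omits, in particular the use of Theorem~\ref{T4.1}(2) to make the source and target maps well defined and the connectivity argument for linearity, and nothing in it goes astray.
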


\begin{proof}
Parts (1), (2) and~(5) are obvious, and part~(3) is a consequence of the loop-freeness of~$K$. Part~(4) holds because $x_0^-$~and~$x_0^+$ are basis elements and because, by unitality, if $a$~is a $1$-dimensional basis element then $\d^- a$ and $\d^+ a$ are basis elements.
\end{proof}

\section{Opetopic directed complexes and opetopic sequences} \label{S5}

We will now apply the results of the previous section to opetopic directed complexes, and we will show that they are equivalent to opetopic sequences. We will also show that reduced opetopic directed complexes are equivalent to opetopes, thereby proving Theorem~\ref{T2.7}. In order to follow the argument, one should compare the reduced opetopic directed complex in Table~1 with the opetope in Figure~1.

Recall that an opetopic directed complex~$K$ is in particular atomic and unital. Given such a complex, we define a particular member~$\langle K\rangle$ of $\nu K$ as follows.

\begin{definition} \label{D5.1}
Let $K$ be an $n$-dimensional atomic unital free augmented directed complex with $n$-dimensional basis element~$g$. Then the \emph{canonical atom} of~$K$ is the member~$\langle K\rangle$ of $\nu K$ given by
$$\langle K\rangle=\bigl(\,(\d^-)^q g,(\d^+)^q g\mid\ldots
 \mid\d^- g,\d^+ g\mid g,g\mid 0,0\mid\ldots\,\bigr).$$
\end{definition}

(It is straightforward to check that this double sequence really is a member of $\nu K$; in particular we have $\e(\d^-)^q g=\e(\d^+)^q g=1$ because $K$~is unital.)

We will now show how opetopic sequences are associated to opetopic directed complexes.

\begin{theorem} \label{T5.2}
Let $K$ be an $n$-dimensional opetopic directed complex, and for $0\leq q\leq n$, let $N_q=G_q^-(\langle K\rangle)$. Then there is an $n$-dimensional opetopic sequence $N_0\to\ldots\to N_n$ such that the thin edges and vertices in~$N_q$ correspond to the thin basis elements in~$K$ and such that the constellations are the bijections from vertices to input edges induced by the structure of~$\langle K\rangle$.
\end{theorem}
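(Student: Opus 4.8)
The plan is to build the opetopic sequence directly out of Theorem~\ref{T4.2} applied to the canonical atom $x=\langle K\rangle$, and then to check the axioms of Definitions~\ref{D3.3}, \ref{D3.5} and~\ref{D3.6} one at a time. Write~$g$ for the $n$-dimensional generator of~$K$, so that $x_q^-=(\d^-)^{n-q}g$ and $x_q^+=(\d^+)^{n-q}g$ for $0\le q\le n$ while $x_q^\pm=0$ for $q>n$. Since~$K$ is an opetopic directed complex, $\d^+a$ is a single non-thin basis element for every positive-dimensional basis element~$a$ by Definition~\ref{D2.5}(1); iterating, $(\d^+)^{n-q}g$ is a single basis element, so each $N_q=G_q^-(x)$ has a unique output edge, and as $x_{n+1}^-=0$ the network~$N_n$ consists of the single edge corresponding to~$g$. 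Theorem~\ref{T4.2} further gives that the~$N_q$ are acyclic networks, that~$N_0$ is linear, that~$N_q$ is empty for $q>n$, and that the correspondences of Theorem~\ref{T4.2}(5) induce bijections~$c$ from the vertices of~$N_q$ to the input edges of~$N_{q+1}$; moreover~$N_0$ has no thin edges because its edges are zero-dimensional. Finally I would \emph{define} an edge or vertex of~$N_q$ to be thin precisely when the corresponding basis element of~$K$ is thin, so that the asserted correspondence of thin cells with thin basis elements holds by construction.

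The next step is to check that each~$N_q$, with this thin structure, is an opetopic network (Definition~\ref{D3.3}). An edge of~$N_q$ has a source only if the corresponding basis element is of the form~$\d^+a$ for some term~$a$ of~$x_{q+1}^-$, hence only if it is non-thin by Definition~\ref{D2.5}(1); thus every thin edge is an input edge, which is condition~(1). For condition~(2), a thin vertex corresponds to a thin basis element~$a$ that is a term of~$x_{q+1}^-$, and the edges having that vertex as target correspond to the terms of~$\d^-a$, which by Definition~\ref{D2.5}(2) is a single non-thin basis element; so the vertex is the target of exactly one edge, and that edge is not thin. Together with the facts that~$N_0$ is linear with no thin edges and that~$N_n$ is a single edge, this supplies the boundary conditions of Definition~\ref{D3.6}, so all that remains is to show that the bijections~$c$ are constellations.

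For the constellation condition I would first note that~$c$ acts as the identity on the underlying basis elements, since both the vertices of~$N_q$ and the input edges of~$N_{q+1}$ correspond to the terms of~$x_{q+1}^-$; it follows that thin vertices correspond to thin edges, because by condition~(1) above the thin edges of~$N_{q+1}$ are precisely the input edges labelled by the thin terms of~$x_{q+1}^-$. Next I would rephrase the remaining clause of Definition~\ref{D3.5} in terms of chains. In~$N_q$ the vertex~$v_a$ is the source of exactly one edge, the one labelled~$\d^+a$, and is the target of the edges labelled by the terms of~$\d^-a$; moreover by Theorem~\ref{T4.1}(2) the basis elements~$\d^+a$ are distinct as~$a$ ranges over the terms of~$x_{q+1}^-$, and no $q$-dimensional basis element occurs in two of the chains~$\d^-a$ (otherwise $g_q^-(x)$ would fail to be a sum of distinct basis elements). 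Hence, for the edge~$e$ of~$N_{q+1}$ labelled by a basis element~$b$, writing~$S$ for the set of terms~$a$ of~$x_{q+1}^-$ whose input edge admits a path to~$b$ in~$N_{q+1}$, so that $c^{-1}(I_e)=\{v_a:a\in S\}$, the number of edges of~$N_q$ with a source but not a target in~$c^{-1}(I_e)$ equals the number of $q$-dimensional basis elements occurring in~$\sum_{a\in S}\d^+a$ but not in~$\sum_{a\in S}\d^-a$. Since~$\sum_{a\in S}\d^+a$ has all coefficients~$0$ or~$1$, that number is the number of terms of positive coefficient in the chain $\sum_{a\in S}\d^+a-\sum_{a\in S}\d^-a=\sum_{a\in S}\d a$.

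The heart of the matter is then the identity $\sum_{a\in S}\d a=\d b$, which I would prove by induction along the loop-free partial order on the $(q+1)$-dimensional basis elements. If~$b$ is itself a term of~$x_{q+1}^-$ then~$b$ is an input edge of~$N_{q+1}$, the only path to~$b$ is the trivial one, and $S=\{b\}$, so the identity holds trivially. Otherwise, by Theorem~\ref{T4.1}(2) again, $b=\d^+a'$ for a unique term~$a'$ of~$x_{q+2}^-$; the terms~$d$ of~$\d^-a'$ are edges of~$N_{q+1}$ strictly preceding~$b$, every path to~$b$ passes through exactly one of them, and---because~$\d^+$ of a basis element is a single basis element, so that every edge of~$N_{q+1}$ has a uniquely determined onward path in the acyclic network~$N_{q+1}$---the sets~$S_d$ of terms of~$x_{q+1}^-$ admitting a path to~$d$ are pairwise disjoint with union~$S$. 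Applying the inductive hypothesis to each~$d$ and using $\d\d^-=\d\d^+$ then gives $\sum_{a\in S}\d a=\sum_d\sum_{a\in S_d}\d a=\sum_d\d d=\d(\d^-a')=\d(\d^+a')=\d b$. Since the positive part of~$\d b$ is the single basis element~$\d^+b$, the constellation condition follows, with~$\d^+b$ as the distinguished edge. I expect this inductive identity---and in particular the observation that~$\d^+$ being a single basis element makes the paths in~$N_{q+1}$ branch like those in a tree---to be the main obstacle; once it is established, the rest is routine bookkeeping with Theorems~\ref{T4.1} and~\ref{T4.2} and Definition~\ref{D2.5}.
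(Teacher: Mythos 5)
Your proposal is correct, and its overall skeleton coincides with the paper's: both verify the axioms of Definition~\ref{D3.3} directly from conditions (1) and (2) of Definition~\ref{D2.5} via Theorem~\ref{T4.2}, and both reduce the constellation condition to the chain identity $\sum_{a\in S}\d a=\d b$ (in the paper's notation, $\d b'=\d b$ with $b'$ the sum of the basis elements indexing $I_e$), after which the single distinguished edge is read off as $\d^+b$. Where you differ is in the proof of that identity. The paper introduces the $(q+2)$-chain $c$ given by the sum of the vertices lying on the paths from $I_e$ to $e$, observes that confluence forces every edge incident with such a vertex to lie on one of these paths, deduces $\d c=b-b'$ by a telescoping count, and then gets $\d b'=\d b$ in one stroke from $\d\d=0$. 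You instead run an induction along the loop-free partial order on the $(q+1)$-dimensional basis elements, peeling off the source vertex $a'$ of $b$, decomposing $S$ as the disjoint union of the sets $S_d$ over the terms $d$ of $\d^-a'$ (disjointness coming from acyclicity plus the uniqueness of onward paths), and telescoping via $\d\d^-=\d\d^+$. The two arguments encode the same combinatorial fact; the paper's is shorter because $\d\d=0$ does all the telescoping at once, while yours is more self-contained in that it never needs to assert that $\d c=b-b'$ ``from the structure of $N_{q+1}$'' --- a step the paper leaves rather terse --- and instead makes the tree-like branching of the confluent acyclic network explicit. Your bookkeeping with Theorem~\ref{T4.1}(2) (distinctness of the terms of $g_q^-(x)$, needed to convert counts of edges into coefficients of chains) is accurate and is implicitly used in the paper as well.
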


\begin{proof}
Let $g$ be the $n$-dimensional basis element of~$K$. By Theorem~\ref{T4.2}, each network~$N_q$ is acyclic with output edges corresponding to the terms of $(\d^+)^{n-q}g$. Since $K$~is opetopic, the chains $(\d^+)^{n-q}g$ are basis elements; the networks~$N_q$ therefore have unique output edges. For the same reason, each vertex is the source for exactly one edge. Since $\d^+a$ is a non-thin basis element whenever $a$~is a basis element of positive dimension, it follows that every thin edge is an input edge. Since $\d^- a$ is a non-thin basis element whenever $a$~is a thin basis element, it follows that each thin vertex is the target for exactly one edge, and that this edge is not thin. Therefore each network~$N_q$ is an opetopic network.
By Theorem~\ref{T4.2}, $N_0$~is linear. Since thin basis elements are of positive dimension, $N_0$~has no thin edges. The network~$N_n$ consists of the single edge corresponding to~$g$. Since every thin edge is an input edge, the thin vertices of~$N_q$  correspond to the thin edges of~$N_{q+1}$ under the induced bijection. It remains to verify the constellation condition of Definition~\ref{D3.5}.

To do this, let $e$ be an edge in~$N_{q+1}$ with $q\geq 0$, let $I_e$ be the set of input edges~$e'$ in~$N_{q+1}$ such that there is a path from~$e'$ to~$e$, let $b$ be the $(q+1)$-dimensional basis element corresponding to~$e$, let $b'$ be the sum of the $(q+1)$-dimensional basis elements corresponding to the members of~$I_e$, and let $c$ be the sum of the $(q+2)$-dimensional basis elements corresponding to the vertices on the paths from members of~$I_e$ to~$e$. Since each vertex is the source for exactly one edge, every edge incident with a vertex corresponding to a term of~$c$ is in the union of these paths. From the structure of~$N_{q+1}$, it follows that $\d c=b-b'$. From this it follows that $\d b'=\d b$, hence $\d^+ b'=\d^+ b$. Since $\d^+ b$ is a basis element, so also is $\d^+ b'$. 

Now let $V_e$ be the set of vertices in~$N_q$ corresponding to the edges in~$I_e$, so that $b'$~is the sum of the basis elements corresponding to the members of~$V_e$. From the structure of~$N_q$, we see that the terms of $\d^+ b'$ correspond to the edges with a source but not a target in~$V_e$, and it follows that there is exactly one such edge.

This completes the proof.
\end{proof}

We will now prove Theorem~\ref{T2.7} by proving the following result.

\begin{theorem} \label{T5.3}
Every $n$-dimensional opetopic sequence
$$N_0\to\ldots\to N_n$$
is isomorphic to the sequence
$$G_0^-(\langle K\rangle)\to\ldots\to G_n^-(\langle K\rangle)$$
for some $n$-dimensional opetopic directed complex~$K$, and $K$~is uniquely determined up to isomorphism. The complex is reduced if and only if the sequence is an $n$-dimensional opetope.
\end{theorem}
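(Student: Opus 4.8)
The statement has three parts: existence of $K$, uniqueness of $K$, and the characterization of reducedness. I would prove them in roughly that order, building $K$ by reading off basis elements and boundaries from the opetopic sequence in a way that inverts the construction of Theorem~\ref{T5.2}.

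\emph{Construction of $K$ from the sequence.} Given $N_0\to\ldots\to N_n$, I would first use Proposition~\ref{P3.9} to replace each $N_q$ by a confluent opetopic network, so every vertex is the source of exactly one edge. The key observation is that the constellation $N_q\to N_{q+1}$ identifies the vertices of $N_q$ with the input edges of $N_{q+1}$; iterating, each edge of $N_q$ acquires, via the unique edge-out-of-each-vertex property and the constellation bijections, a ``name'' as a vertex of $N_{q+1}$, hence as an input edge of $N_{q+2}$, and so on up to a single edge in $N_n$. So the basis elements of $K$ in dimension $q$ should be the edges of $N_{n-q}$ (with the single edge of $N_n$ as the top basis element $g$ in dimension $n$). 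For a $q$-dimensional basis element $a$, corresponding to an edge $e$ of $N_{n-q}$ which is a vertex $v$ of $N_{n-q+1}$: define $\d^- a$ to be the sum of the $(q-1)$-dimensional basis elements corresponding to the edges of $N_{n-q+1}$ with target $v$, and $\d^+ a$ to be the (single, by confluence) basis element corresponding to the edge with source $v$. Declare $a$ thin exactly when $e$ is a thin edge of $N_{n-q}$ (equivalently $v$ is a thin vertex of $N_{n-q+1}$); set $\e a = 1$ for the zero-dimensional basis elements. One must then check: $\d\d = 0$ and $\e\d = 0$, using acyclicity and the constellation condition (this is where the real content sits — see below); that $K$ is atomic, using that $N_n$ is a single edge and the input edges of $N_0$ being the vertices of... here one needs $N_0$ linear to see the chain bottoms out correctly and that every lower basis element is hit by some $\d^\pm$; that $K$ is unital via Proposition~\ref{P2.8}(1) (immediate from $\e a = 1$ on zero-cells and $\d^+ a$ always a basis element); and that $K$ is loop-free via Proposition~\ref{P2.8}(2), reading off a partial order from acyclicity of the $N_q$. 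Finally one verifies the opetopic conditions (1) and (2) of Definition~\ref{D2.5}: $\d^+ a$ non-thin because thin edges are input edges hence never of the form $\d^+$; $\d^- a$ non-thin for thin $a$ because a thin vertex is the target of a single non-thin edge. Then $G_q^-(\langle K\rangle) \cong N_q$ follows by unwinding Definition~\ref{D5.1} and Theorem~\ref{T4.2}, essentially because the construction was set up to invert Theorem~\ref{T5.2}.

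\emph{Uniqueness.} Here I would argue that any opetopic directed complex $K'$ with $G_q^-(\langle K'\rangle) \cong N_q$ compatibly must have its $q$-dimensional basis elements in bijection with the edges of $N_{n-q}$ (by Theorem~\ref{T4.2}(1) and the fact that $\langle K'\rangle$ reaches every basis element since $K'$ is atomic), and that this bijection is forced to respect $\d^-$, $\d^+$ and thinness by the description of $G_q^\alpha$ in Theorem~\ref{T4.2}(1) together with Theorem~\ref{T5.2}. The augmentation is forced by unitality. So $K'$ is isomorphic to the $K$ just built.

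\emph{Reducedness.} By Definition~\ref{D3.4} an opetopic network is reduced when every thin edge has a target vertex which is the target of no other edge; by Definition~\ref{D2.6}, $K$ is reduced when every thin basis element is $\d^- a$ for some $a$. Under the dictionary ``thin basis element $a$ in dimension $q$ $\leftrightarrow$ thin edge $e$ of $N_{n-q}$ $\leftrightarrow$ thin vertex $v$ of $N_{n-q+1}$'', the condition ``$a = \d^- a''$ for some basis element $a''$'' translates to ``$v$ is a vertex on which $\d^-$ of the higher cell lands'', i.e.\ the edge $e$ (as an edge of $N_{n-q}$) has a target vertex which is the target of exactly that one edge. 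So $K$ reduced $\iff$ each $N_q$ reduced $\iff$ (by Definition~\ref{D3.7}) the sequence is an opetope. I would spell out carefully that ``thin edge is an input edge'' (Definition~\ref{D3.3}(1)) together with confluence means the only freedom is whether the thin edge also has a \emph{target}, and that having a target vertex which is the target of no other edge is exactly the assertion that the corresponding thin basis element appears as a $\d^-$.

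\emph{Main obstacle.} The crux is verifying $\d\d = 0$ for the constructed complex, i.e.\ that $\d^-\d^+ = \d^-\d^-$ and the analogous positive identity. Concretely: for a basis element $a$ at dimension $q$, corresponding to vertex $v$ in $N_{n-q+1}$, $\d^+\d^- a$ sees the sources of the edges into $v$, while $\d^+\d^+ a$ sees the source of the edge out of $v$ followed by... — untangling that these multisets of $(q-2)$-cells agree requires precisely the constellation condition of Definition~\ref{D3.5} (``exactly one edge with a source but not a target in $c^{-1}(I_e)$''), applied along the paths feeding into $v$, much as in the proof of Theorem~\ref{T5.2} but run in reverse. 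Getting the bookkeeping of multiplicities right there — and simultaneously checking that the result is a \emph{sum of distinct} basis elements rather than merely a chain — is the step I expect to take the most care; everything else is a matter of patiently matching definitions.
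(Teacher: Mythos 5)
Your overall strategy --- build $K$ by reading basis elements off the networks, verify the axioms via Proposition~\ref{P2.8}, identify $N_q$ with $G_q^-(\langle K\rangle)$, and match Definitions \ref{D2.6} and~\ref{D3.4} for the reducedness claim --- is the paper's strategy, and your treatment of uniqueness and of reducedness is essentially right. But the core of the construction has a genuine gap, in two related places. First, the indexing is internally inconsistent: you declare the $q$-dimensional basis elements to be the edges of $N_{n-q}$ while simultaneously wanting the single edge of $N_n$ to be the top cell in dimension~$n$; these are incompatible. The dictionary that works (forced by Theorem~\ref{T4.2}, and visible in comparing Table~1 with Figure~1) is that the $q$-dimensional basis elements are the edges of $N_q$, with the vertices of $N_q$ corresponding to the $(q+1)$-dimensional basis elements.

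Second, and more seriously, your boundary map is defined only locally at a vertex: $\d^- a$ is the sum of the edges into $v$ and $\d^+ a$ is the edge out of $v$. This assigns a boundary only to those basis elements that correspond to vertices, i.e.\ to \emph{input} edges of the next network up. The non-input edges (all the $b_i$ of Table~1, for instance) are basis elements too, and they correspond to no vertex under any constellation, so your recipe gives them no boundary at all. The missing idea is the global definition: for an arbitrary edge $e$ of $N_{q+1}$, take the set $I_e$ of input edges admitting a path to $e$ and the corresponding set $V_e$ of vertices of $N_q$, and set $\d^+ b$ equal to the unique edge with a source but no target in $V_e$ (uniqueness here is exactly the constellation condition of Definition~\ref{D3.5}) and $\d^- b$ equal to the sum of the edges with a target but no source in $V_e$. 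This is also the setup required for the $\d\d=0$ verification that you rightly flag as the crux but do not carry out: one shows, using confluence, that the vertex set in $N_{q-1}$ attached to $\d^+ b$ is the disjoint union of the vertex sets attached to the terms of $\d^- b$. As written, your construction cannot even state that computation. (A smaller slip: Proposition~\ref{P3.9} asserts the networks of an opetopic sequence are already confluent; there is nothing to ``replace''.)
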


\begin{proof} 
We first construct a suitable opetopic directed complex~$K$ as follows. 

There are $q$-dimensional basis elements only for $0\leq q\leq n$. The $q$-dimensional basis elements of~$K$ correspond to the edges of~$N_q$, and the thin basis elements correspond to the thin edges.

To define the boundary homomorphism, let $b$ be a $(q+1)$-di\-men\-sion\-al basis element with $q\geq 0$, let $e$ be the corresponding edge in~$N_{q+1}$, let $I_e$ be the set of input edges~$e'$ in~$N_{q+1}$ such that there is a path from~$e'$ to~$e$, and let $V_e$ be the corresponding set of vertices in~$N_q$. Then $\d b=\d^+ b-\d^- b$, where $\d^+ b$ is the basis element corresponding to the edge with a source but no target in~$V_e$, and where $\d^- b$ is the sum of the basis elements corresponding to the edges with a target but no source in~$V_e$.

We will now show that $K$~is a chain complex, by showing that $\d\d^- b=\d\d^+ b$ for every ${q+1}$-dimensional basis element~$b$ with $q\geq 1$. Indeed, let $V_e$ be the set of vertices in~$N_q$ as in the previous paragraph, let $e^+$ be the edge with a source but no target in~$V_e$, let $J_e$ be the set of edges with a target but no source in~$V_e$, and let $e'$ be an input edge in~$N_q$. Since $N_q$~is confluent (Proposition~\ref{P3.9}), there can be a path from~$e'$ to at most one member of~$J_e$, and there is such a path if and only if there is a path from~$e'$ to~$e^+$. The set of vertices in~$N_{q-1}$ corresponding to~$e^+$ is therefore the disjoint union of the sets corresponding to the members of~$J_e$, and it follows from this that $\d\d^- b=\d\d^+ b$ as required.

We define the augmentation as follows: if $a$~is a zero-dimensional basis element then $\e a=1$. From the constellation condition and the linearity of~$N_0$, if $b$~is a $1$-dimensional basis element then $\d^+ b$ and $\d^- b$ are zero-dimensional basis elements, and it follows that $\e\d=0$.

We have now constructed a free augmented directed complex~$K$. From the constellation condition, if $b$~is a positive-dimensional basis element then $\d^+ b$ is a basis element. It therefore follows from Proposition~\ref{P2.8} that $K$~is unital. Since the networks~$N_q$ are acyclic, it also follows from Proposition~\ref{P2.8} that $K$~is loop-free.

By construction, there are no basis elements of dimension greater than~$n$. There is a single basis element~$g$ of degree~$n$, because $N_n$ consists of a single edge. By downward induction on~$q$, the output edge of~$N_q$ corresponds to the basis element $(\d^+)^{n-q}g$. For $0\leq q<n$, it follows that the input edges of~$N_q$ correspond to the terms of the chain $\d^-(\d^+)^{n-q-1}g=(\d^-)^{n-q}g$, and of course the input edge of~$N_n$ corresponds to the single term of the chain~$g$. For $0\leq q<n$ it then follows that the vertices and non-input edges of~$N_q$ correspond to the basis elements $a$~and $\d^+ a$ such that $a$~is a term in $(\d^-)^{n-q-1}g$. From these calculations, we see that $K$~is atomic. Since the thin edges and vertices of the networks~$N_q$ satisfy the conditions of Definition~\ref{D3.3}, the thin basis elements of~$K$ satisfy the conditions of Definition~\ref{D2.5}. Therefore $K$~is an opetopic directed complex.

We will now show that $N_q\cong G_q^-(\langle K\rangle)$. We recall that the edges of $G_q^-(\langle K\rangle)$ correspond to the terms of the chain $g_q^-(\langle K\rangle)$. We have $g_n^-(\langle K\rangle)=g$; also, if $0\leq q<n$, then
$$g_q^-(\langle K\rangle)=(\d^-)^{n-q}g+\d^+ a_1+\ldots+\d^+ a_k$$
where $a_1,\ldots,a_k$ are the terms of $(\d^-)^{n-q-1}g$. From these computations we see that the edges of~$N_q$ correspond to the edges of $G_q^-(\langle K\rangle)$. It clearly follows that the sequence $N_0\to\ldots\to N_n$ is isomorphic to the sequence
$$G_0^-(\langle K\rangle)\to\ldots\to G_n^-(\langle K\rangle),$$
and it is also clear that $K$~is unique up to isomorphism. 

By comparing Definitions \ref{D2.6} and~\ref{D3.4}, we see that $K$~is reduced if and only if all the networks~$N_q$ are reduced; equivalently, $K$~is reduced if and only if the sequence $N_0\to\ldots\to N_n$ is an $n$-dimensional opetope.

This completes the proof.
\end{proof}

\section{Subcomplexes and reductions} \label{S6}

Let $K$ be an opetopic directed complex and let $h$ be a basis element for~$K$; then there is clearly an opetopic directed subcomplex generated by~$h$. In particular, let $K$ be reduced and $n$-dimensional with $n>0$, so that $K$~corresponds to an opetope of positive dimension; then the sources and target for this opetope essentially correspond to the atomic subcomplexes generated by the $(n-1)$-dimensional basis elements of~$K$.
In this way we obtain an algebraic description of source and target opetopes.

This algebraic description seems on the whole to be simpler than the combinatorial description given in \cite{B2},~5. But there is a complication, because a subcomplex of a reduced complex may fail to be reduced. We will now explain how by a two-stage process one can obtain a reduced opetopic directed complex from an arbitrary opetopic directed complex, or, equivalently, an opetope from an opetopic sequence.

In terms of networks, the process is as follows. Let $N_0\to\ldots\to N_n$ be an opetopic sequence. In the first stage, we consider the vertices~$v$ in~$N_q$ such that $v$~is a target only for thin edges. The vertices in~$N_{q-1}$ corresponding to the edges with target~$v$, together with their incident edges, must form a path. We replace the edges in~$N_q$ with target~$v$ by a single thin edge with target~$v$, and we replace the corresponding path in~$N_{q-1}$ by a path consisting of two non-thin edges and a single thin vertex. In this way we obtain a sequence $N'_0\to\ldots\to N'_n$ such that any vertex which is a target only for thin edges is in fact the target for only one edge.

In the second stage we consider the thin edges~$e$ in~$N'_q$ such that $e$~has no target (this can happen only for $q=n$) or such that the target of~$e$ is also the target for at least one non-thin edge. The thin vertex in~$N'_{q-1}$ corresponding to~$e$, together with its incident edges, forms a path with two edges and one vertex. We delete the edge~$e$ from~$N'_q$, and we replace the corresponding path in~$N'_{q-1}$ by a single non-thin edge. In this way we we obtain a sequence of reduced opetopic networks, so that we have an opetope as required. Note that if $N_n$~originally consisted of a thin edge then the final opetope will be $(n-1)$-dimensional.

In terms of complexes this process amounts to forming a quotient of a subcomplex. In terms of bases, we proceed as follows. First, for each basis element~$b$ such that $\d^- b$ is a sum of thin basis elements, we can write the sum as
$$\d^- b=a_1+\ldots+a_k$$
such that
$$\d^-\d^- b=\d^- a_1,\ \d^+ a_1=\d^- a_2,\ \ldots,\ \d^+ a_k=\d^+\d^- b.$$
We obtain the basis for the subcomplex by replacing the basis elements 
$$a_1,\d^+ a_1,a_2,\ldots,a_{k-1},\d^+ a_{k-1},a_k$$
with the single chain $\d^- b$.

Next we obtain the basis for the quotient of this subcomplex in the following way: for each thin basis element~$a$ which is not of the form $\d^- b$ for some other basis element~$b$, we equate~$a$ to~$0$ and we equate $\d^+ a$ to $\d^- a$. In this way we obtain a reduced opetopic directed complex.

In particular the sources and targets for opetopes may be described as follows. Let $K$~be an $n$-dimensional reduced opetopic directed complex with $n>0$, and let $g$ be its $n$-dimensional basis element. If $\d^- g$ is a thin basis element, then $K$~has no sources. If $\d^- g$ is not a thin basis element, then the sources of~$K$ are the reductions of the subcomplexes generated by the terms of $\d^- g$. The target of~$K$ is the reduction of the subcomplex generated by $\d^+ g$.

Note also that it is very easy to describe the bases for the subcomplexes. Indeed, let $L$ be an $m$-dimensional atomic subcomplex of an opetopic directed complex and let $h$ be the $m$-dimensional basis element of~$L$. If $0<r\leq m$ then $(\d^+)^{r-1}h$ is a basis element and $(\d^-)^r h$ can be computed from the formula
$$(\d^-)^r h=\d^-(\d^+)^{r-1}h.$$
For $0\leq q<m$ the $q$-dimensional basis elements of~$L$ are then the terms of the sum
$$g_q^-(\langle L\rangle)=(\d^-)^{m-q}h+\d^+ a_1+\ldots+\d^+ a_k,$$
where $a_1,\ldots,a_k$ are the terms of $(\d^-)^{m-q-1}h$.

We will now carry out these calculations for the reduced opetopic directed complex in Table~1. For the target, we use the subcomplex generated by~$b_{17}$. The basis elements for this subcomplex can be tabulated as follows. 
$$\begin{array}{|l|l|l|l|l|}
\hline
b_{17}&
a_{12},a_{11},a_{10},a_{9.5},a_9,a_8&
a_7,a_6,a_{5.5},a_5&
a_4,a_3,a_2&
a_1\\
\hline
b_{13}&
b_{12},b_{11},b_{10},b_{9.5},b_9,b_8&
b_7,b_6,b_{5.5},b_5&
b_4,b_3,b_2&
\\
\hline
\end{array}$$
The boundaries can be obtained from Table~1. This subcomplex is in fact reduced, so it gives the target opetope directly.

\begin{table}

$$\begin{array}{||l|c|l|l||l|c|l|l||}
\hline\hline
a_{13}&4&a_{12}+b_{16}+b_{15}+b_{14}&b_{13}&b_{13}&3&a_7+a_6+a_5&b_8\\ 
\hline
a_{12}&3&a_7+a_6&b_{12}&b_{12}&2&a_3+a_2&b_6\\
b_{16}&&b_{12}&b_{9.5}&b_{9.5}&&a_3+a_2&b_6\\
b_{15}&&a_5+b_{9.5}&b_9&b_9&&a_4+a_3+a_2&b_5\\
b_{14}&&b_9&b_8&b_8&&a_4+a_3+a_2&b_5\\
\hline
a_7&2&a_3&b_7&b_7&1&b_4&b_3\\
a_6&&a_2+b_7&b_6&b_6&&b_4&b_2\\
a_5&&a_4+b_6&b_5&b_5&&a_1&b_2\\
\hline
a_4&1&a_1&b_4&b_4&0&0&0\\
a_3&&b_4&b_3&b_3&&0&0\\
a_2&&b_3&b_2&b_2&&0&0\\
\hline
a_1&0&0&0&&&&\\
\hline\hline
\end{array}.$$

\caption{The source generated by~$a_{13}$}

\end{table}

For the sources we use the subcomplexes generated by $a_{16}$, $a_{15}$, $a_{14}$ and~$a_{13}$. The corresponding tables of basis elements are as follows.
\begin{align*}
&\begin{array}{|l|l|l|l|l|}
\hline
a_{16}&
a_{9.5}&
b_{12}&
a_3,a_2&
b_4\\
\hline
b_{16}&
b_{9.5}&
b_6&
b_3,b_2&
\\
\hline
\end{array}\\
&\begin{array}{|l|l|l|l|l|}
\hline
a_{15}&
a_{11},a_9&
b_{9.5},a_5&
b_{5.5},a_3,a_2&
a_1\\
\hline
b_{15}&
b_{11},b_9&
b_6,b_5&
b_4,b_3,b_2&
\\
\hline
\end{array}\\
&\begin{array}{|l|l|l|l|l|}
\hline
a_{14}&
a_{10},a_8&
a_{5.5},b_9&
a_4,a_3,a_2&
a_1\\
\hline
b_{14}&
b_{10},b_8&
b_{5.5},b_5&
b_4,b_3,b_2&
\\
\hline
\end{array}\\
&\begin{array}{|l|l|l|l|l|}
\hline
a_{13}&
a_{12},b_{16},b_{15},b_{14}&
a_7,a_6,a_{5.5},a_5&
a_4,a_3,a_2&
a_1\\
\hline
b_{13}&
b_{12},b_{9.5},b_9,b_8&
b_7,b_6,b_{5.5},b_5&
b_4,b_3,b_2&
\\
\hline
\end{array}
\end{align*}
The subcomplexes generated by $a_{16}$, $a_{15}$ and~$a_{14}$ are reduced, so they give the corresponding source opetopes directly. The subcomplex generated by~$a_{13}$ is not reduced, because it contains the thin basis element $a_{5.5}=\d^- a_{10}$ without the basis element~$a_{10}$. To obtain the corresponding reduced complex, we equate $a_{5.5}$~and~$b_{5.5}$ to zero and~$a_4$ respectively. The boundaries in this reduced complex are shown in Table~2.

One can check that the sources constructed here agree with those of \cite{B2}, 5.9--5.13.

\end{document}